\theoremstyle{plain}
\newtheorem{thm}{Theorem}[section]
\newtheorem{Example}{Example}[section]
\newtheorem{note}{Note}[section]
\theoremstyle{definition}
\newtheorem{defn}{Definition}[section]
\newtheorem{op}{Open Problem}[section]
\begin{document}

\setcounter {page}{1}
\title{a note on rough statistical convergence of order $\alpha$}

\author[M. Maity]{Manojit Maity\ }
\newcommand{\acr}{\newline\indent}
\maketitle
\address{ 25 Teachers Housing Estate, P.O.- Panchasayar, Kolkata- 700094, West Bengal, India. Email: mepsilon@gmail.com\\}

\maketitle
\begin{abstract}
In this paper, in the line of Aytar\cite{Ay2} and \c{C}olak \cite{Co}, we introduce the notion of rough statistical convergence of order $\alpha$ in normed linear spaces and study some properties of the set of all rough statistical limit points of order $\alpha$.

\end{abstract}
\author{}
\maketitle
{ Key words and phrases :} Rough statistical convergence of order $\alpha$,
rough statistical limit points of order $\alpha$, \\

\textbf {AMS subject classification (2010) : 40A05, 40G99} .  \\

\section{\textbf{Introduction:}} The concept of statistical convergence was introduced by Steinhaus \cite{St} and Fast \cite{Fa} and later it was reintroduced by Schoenberg \cite{Sc} independently. Over the years a lot of works have been done in this area.
The concept of rough statistical convergence of single sequences was first introduced by S. Aytar \cite{Ay2}. Later the concept of statistical convergence of order $\alpha$ was introduced by R. $\c{C}$olak \cite{Co}.

If $ x = \{x_n\}_{n \in \mathbb{N}}$ is a sequence in some normed linear space $(X, \parallel . \parallel)$ and $r$ is a nonnegative real number then $x$ is said to rough statistical convergent to $\xi \in X$ if for any $\varepsilon > 0$, $ {\underset{n \rightarrow \infty}{\lim}} \frac{1}{n} | \{ k \leq n : \parallel x_k - \xi \parallel \geq r + \varepsilon \}| = 0 $, \cite{Ay2}.

For $ r = 0 $, rough statistical convergence coincides with statistical convergence.

In this paper following the line of Aytar \cite{Ay2} and $\c{C}$olak \cite{Co} we introduce the notion of rough statistical convergence of order $\alpha$ in normed linear spaces and prove some properties of the set of all rough statistical limit points of order $\alpha$.

\section{\textbf{Basic Definitions and Notations}}
\begin{defn}
Let $K$ be a subset of the set of positive integers $\mathbb{N}$. Let $K_n = \{k \in K : k \leq n\}$. 
Then the natural density of $K$ is given by $\underset{n \rightarrow \infty}{\lim}\frac{|K_n|}{n}$, where 
$|K_n|$ denotes the number of elements in $K_n$.
\end{defn}

\begin{defn}
Let $K$ be a subset of the set of positive integers $\mathbb{N}$ and $\alpha$ be any real number 
with $0< \alpha \leq 1$. Let $K_n = \{k \in K : k \leq n\}$.Then the natural density of order $\alpha$ of 
$K$ is given by $\underset{n \rightarrow \infty}{\lim}\frac{|K_n|}{n^{\alpha}}$, where 
$|K_n|$ denotes the number of elements in $K_n$.
\end{defn}

\begin{note}
Let $x = \{x_n\}_{n \in \mathbb{N}}$ be a sequence. Then $x$ satisfies some property P for all $k$ except a set 
whose natural density is zero. Then we say that the sequence $x$ satisfies P for almost all $k$ and we abbreviated this
by a.a.k.
\end{note}

\begin{note}
Let $x = \{x_n\}_{n \in \mathbb{N}}$ be a sequence. Then $x$ satisfies some property P for all $k$ except a set 
whose natural density of order $\alpha$ is zero. Then we say that the sequence $x$ satisfies P for almost all $k$ and we abbreviated this by a.a.k($\alpha$).
\end{note}

\begin{defn}
Let $x = \{x_n\}_{n \in \mathbb{N}}$ be a sequence in a normed linear space $(X,\parallel . \parallel)$ and $r$ be a nonnegative real number. Let $0 < \alpha \leq 1$ be given. Then $x$ is said to be rough statistical convergent of order $\alpha$ to $\xi \in X$, denoted by $ x \overset{st-r^{\alpha}}\longrightarrow \xi $ if for any $ \varepsilon > 0 $, $\underset{n \rightarrow \infty}{\lim}\frac{1}{n^{\alpha}} | \{ k \leq n : \parallel x_k - \xi \parallel \geq r+\varepsilon \} | = 0$, that is a.a.k.$(\alpha) \parallel x_k - \xi \parallel < r+\varepsilon$ for every $\varepsilon > 0$ and some $r > 0$. In this case $\xi$ is called a  $ r^{\alpha}$-st-limit of $x$. 

The set of all rough statistical convergent sequences of order $\alpha$ will be denoted by $rS^{\alpha}$ for fixed $r$ with $ 0 < r \leq 1$.
\end{defn}

Throughout this paper $X$ will denote a normed linear space and $r$ will denote a nonnegative real number and $x$ will denote the sequence $ x = \{x_n\}_{n \in \mathbb{N}}$ in $X$.

In general, the $ r^{\alpha} $-st-limit point of a sequence may not be unique. So we consider $r^{\alpha}$-st-limit set of a sequence $x$, which is defined by $st\mbox{-}LIM_x^{r^{\alpha}} = \{\xi \in X : x_n\overset{st-r^{\alpha}}\longrightarrow \xi \}$. The sequence $x$ is said to be $r^{\alpha}$-statistical convergent provided that $st\mbox{-}LIM_x^{r^{\alpha}} \neq \emptyset$.\\
For unbounded sequence rough limit set $LIM_x^r = \emptyset$.\\
But in case of rough statistical convergence of order $\alpha$ $st\mbox{-}LIM_x^{r^{\alpha}} \neq \emptyset$ even though the sequence is unbounded. For this we consider the following example.

\begin{Example}
Let $ X = \mathbb{R} $. We define a sequence in the following way,
\begin{eqnarray*}
x_n &=& (-1)^n~~:i~~ \neq~~ n^2,~~ \alpha=1 \\
    &=& n,~~ \mbox{otherwise}
\end{eqnarray*}\\
Then 
\begin{eqnarray*}
st\mbox{-}LIM_x^{r^{\alpha}} &=& \emptyset, ~~\mbox{if}~~ r<1 \\
                      &=& [1-r,r-1],~~\mbox{otherwise}.
\end{eqnarray*}
and $LIM_x^{r^{\alpha}}=\emptyset$ for all $r \geq 0$.
\end{Example}

\begin{defn}
A sequence $\{x_n\}_{n \in \mathbb{N}}$ is said to be statistically bounded if there exists a positive
real number $M$ such that 
$\underset{n \rightarrow \infty}{\lim}\frac{1}{n}|\{k \leq n : \parallel x_k \parallel \geq M \}| =0 $
\end{defn}

\begin{defn}
Let $0 < \alpha \leq 1 $ be given. A sequence $\{x_n\}_{n \in \mathbb{N}}$ is said to be 
statistically bounded of order $\alpha$ if there exists a positive real number $M$ such that 
$\underset{n \rightarrow \infty}{\lim}\frac{1}{n^{\alpha}}|\{k \leq n : \parallel x_k \parallel \geq M \}| =0 $
\end{defn}

\section{\textbf{Main Results}}

\begin{thm}
Let $x$ be a sequence in $X$. Then $x$ is statistically bounded of order $\alpha$ if and only if there exists a nonnegative real number $r$ such that $st\mbox{-}LIM_x^{r^{\alpha}} \neq \emptyset$.
\end{thm}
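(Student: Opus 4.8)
The plan is to prove the two implications separately; in each direction one transfers the property ``the relevant index set has natural density of order $\alpha$ equal to zero'' from one family of indices to another, the transfer being nothing more than a set inclusion produced by the triangle inequality.

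For the forward implication, suppose $x$ is statistically bounded of order $\alpha$, so there is a positive real number $M$ with $\underset{n \rightarrow \infty}{\lim}\frac{1}{n^{\alpha}}|\{k \leq n : \parallel x_k \parallel \geq M \}| = 0$. I would show that the zero vector $0 \in X$ lies in $st\mbox{-}LIM_x^{r^{\alpha}}$ for the choice $r = M$. Indeed, for every $\varepsilon > 0$ one has the inclusion $\{k \leq n : \parallel x_k - 0 \parallel \geq r + \varepsilon\} \subseteq \{k \leq n : \parallel x_k \parallel \geq M\}$, so the corresponding counting function divided by $n^{\alpha}$ is dominated by a sequence tending to $0$ and hence tends to $0$ itself. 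Thus $x \overset{st-r^{\alpha}}\longrightarrow 0$, and in particular $st\mbox{-}LIM_x^{r^{\alpha}} \neq \emptyset$ for this value of $r$.

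For the converse, assume $st\mbox{-}LIM_x^{r^{\alpha}} \neq \emptyset$ for some nonnegative real number $r$, and fix $\xi \in st\mbox{-}LIM_x^{r^{\alpha}}$. Applying the defining condition of rough statistical convergence of order $\alpha$ with the single value $\varepsilon = 1$, the set $K = \{k : \parallel x_k - \xi \parallel \geq r + 1\}$ has natural density of order $\alpha$ equal to $0$. Put $M = r + 1 + \parallel \xi \parallel$. By the triangle inequality, $\parallel x_k \parallel \geq M$ forces $\parallel x_k - \xi \parallel \geq \parallel x_k \parallel - \parallel \xi \parallel \geq r + 1$, so that $\{k \leq n : \parallel x_k \parallel \geq M\} \subseteq K_n$. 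Dividing by $n^{\alpha}$ and letting $n \rightarrow \infty$ then shows that $x$ is statistically bounded of order $\alpha$ with this $M$.

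The only ingredients are the monotonicity of the counting function under inclusion of index sets together with the squeeze principle for sequences, so there is no substantial obstacle; the one point worth care is that in the converse direction it suffices to use a single $\varepsilon$ (here $\varepsilon = 1$) rather than trying to intersect the exceptional sets over all $\varepsilon > 0$, and that $M$ must be taken large enough to absorb $\parallel \xi \parallel$ through the triangle inequality.
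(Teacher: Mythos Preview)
Your proof is correct and follows essentially the same strategy as the paper: in the forward direction both arguments show that the origin lies in $st\mbox{-}LIM_x^{r^{\alpha}}$ for a suitable $r$, and in the converse both pass from a rough limit $\xi$ to statistical boundedness via the triangle inequality. The only cosmetic differences are that the paper chooses $r' = \sup\{\parallel x_k \parallel : k \in K^c\}$ rather than your simpler choice $r = M$, and that your converse is more explicit (fixing $\varepsilon = 1$ and exhibiting $M = r + 1 + \parallel \xi \parallel$) where the paper merely asserts that almost all $x_k$ lie in a ball of radius greater than $r$ about $\xi$.
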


\begin{proof}
The condition is necessary.\\

Since the sequence $x$ is statistically bounded, there exists a positive real number $M$ such that $\underset{n \rightarrow \infty}{\lim}\frac{1}{n^{\alpha}} | \{k \leq n : \parallel x_k \parallel \geq M\}| = 0$.
Let $K = \{k \in \mathbb{N} : \parallel x_k \parallel \geq M \}$. Define $r^{'} = sup\{\parallel x_k \parallel : k \in K^c\}$. Then the set $st\mbox{-}LIM_x^{r^{'\alpha}}$ contains the origin of $X$. Hence $st\mbox{-}LIM_x^{r^{'\alpha}} \neq \emptyset$.\\

The condition is sufficient.\\

Let $st\mbox{-}LIM_x^{r^{\alpha}} \neq \emptyset$ for some $r \geq 0$. Then there exists $ l \in X $ such that $l \in st\mbox{-}LIM_x^{r^{\alpha}}$. Then $\underset{n \rightarrow \infty}{\lim}\frac{1}{n^{\alpha}} |\{k \leq n : \parallel x_k - l \parallel \geq r + \varepsilon\} | = 0$ for each $\varepsilon > 0$. Then we say that almost all $x_k$'s are contained in some ball with any radius greater than r. So the sequence $x$ is statistically bounded.
\end{proof}

\begin{thm}
If $x^{'}=\{x_{n_k}\}_{{k} \in \mathbb{N}}$ is a subsequence of $x=\{x_n\}_{n \in \mathbb{N}}$ then $st\mbox{-}LIM_x^{r^{\alpha}} \subseteq st\mbox{-}LIM_{x^{'}}^{r^{\alpha}}$.
\end{thm}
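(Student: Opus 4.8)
The plan is to work directly from the definition of the order-$\alpha$ rough statistical limit set. Fix $\xi \in st\mbox{-}LIM_x^{r^{\alpha}}$ and an arbitrary $\varepsilon > 0$; the goal is to show $\xi \in st\mbox{-}LIM_{x'}^{r^{\alpha}}$, that is, that the exceptional set of the subsequence, $B_\varepsilon = \{k \in \mathbb{N} : \parallel x_{n_k} - \xi \parallel \geq r + \varepsilon\}$, has natural density of order $\alpha$ equal to $0$. By hypothesis, putting $A_\varepsilon = \{m \in \mathbb{N} : \parallel x_m - \xi \parallel \geq r + \varepsilon\}$, we know $\frac{1}{n^{\alpha}}\,|A_\varepsilon \cap \{1,\dots,n\}| \to 0$ as $n \to \infty$, and since $\varepsilon$ is arbitrary it suffices to treat one such $\varepsilon$.

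First I would set up a counting comparison between $B_\varepsilon$ and $A_\varepsilon$. The indices satisfy $n_1 < n_2 < \cdots$, hence $n_k \geq k$, and for $k \leq j$ we have $n_k \in \{1,\dots,n_j\}$; moreover $k \in B_\varepsilon$ holds exactly when $n_k \in A_\varepsilon$. Thus $k \mapsto n_k$ embeds $B_\varepsilon \cap \{1,\dots,j\}$ injectively into $A_\varepsilon \cap \{1,\dots,n_j\}$, so $|B_\varepsilon \cap \{1,\dots,j\}| \leq |A_\varepsilon \cap \{1,\dots,n_j\}|$ for every $j$. Dividing by $j^{\alpha}$ gives $\frac{1}{j^{\alpha}}\,|B_\varepsilon \cap \{1,\dots,j\}| \leq \frac{1}{j^{\alpha}}\,|A_\varepsilon \cap \{1,\dots,n_j\}|$, and one then wants to let $j \to \infty$, using that $\{n_j\}$ is a subsequence of $\mathbb{N}$ along which $\frac{1}{n^{\alpha}}\,|A_\varepsilon \cap \{1,\dots,n\}| \to 0$, and finally conclude the inclusion.

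The hard part is precisely this last passage to the limit. What the hypothesis hands over for free is $\frac{1}{n_j^{\alpha}}\,|A_\varepsilon \cap \{1,\dots,n_j\}| \to 0$, whereas what is needed is the same count divided by $j^{\alpha}$; since $\frac{1}{j^{\alpha}}\,|A_\varepsilon \cap \{1,\dots,n_j\}| = \frac{1}{n_j^{\alpha}}\,|A_\varepsilon \cap \{1,\dots,n_j\}| \cdot \frac{n_j^{\alpha}}{j^{\alpha}}$ and the factor $\frac{n_j^{\alpha}}{j^{\alpha}} \geq 1$ can be unbounded for a sparse subsequence, the product does not automatically tend to $0$. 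The argument closes cleanly once the gaps of the subsequence are controlled — for instance under the hypothesis $n_j = O(j)$, which keeps $\frac{n_j^{\alpha}}{j^{\alpha}}$ bounded — and the counting comparison above then finishes the proof immediately. I would therefore either build such a restriction on $\{n_k\}$ into the statement, or flag carefully that this normalization step is where the proof must be secured, since for an arbitrary subsequence (for instance, selecting a density-zero set of indices on which $x$ is constant and far from $\xi$) the stated containment can fail.
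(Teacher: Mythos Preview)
The paper gives no proof at all: it simply reads ``The proof is straight forward. So we omit it.'' Your analysis is considerably more careful, and in fact you have uncovered that the theorem \emph{as stated} is false. The counting comparison you set up, namely $|B_\varepsilon \cap \{1,\dots,j\}| \leq |A_\varepsilon \cap \{1,\dots,n_j\}|$, is correct, and you pinpoint precisely where the argument breaks: passing from the hypothesis $\frac{1}{n_j^{\alpha}}|A_\varepsilon \cap \{1,\dots,n_j\}| \to 0$ to the needed $\frac{1}{j^{\alpha}}|A_\varepsilon \cap \{1,\dots,n_j\}| \to 0$ requires controlling $n_j^{\alpha}/j^{\alpha}$, which is impossible for an arbitrary subsequence.

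Your sketched counterexample is valid and can be made fully explicit. Take $X=\mathbb{R}$, $r=0$, $\alpha=1$, and define $x_m=0$ unless $m$ is a perfect square, in which case $x_m=1$. Since the squares have natural density zero, $0 \in st\mbox{-}LIM_x^{r^{\alpha}}$. But the subsequence $x'=\{x_{n_k}\}$ with $n_k=k^2$ is identically $1$, so $0 \notin st\mbox{-}LIM_{x'}^{r^{\alpha}}$, and the claimed inclusion fails. Thus your proposed remedy --- adding a hypothesis such as $n_j=O(j)$ (equivalently, that the index set $\{n_k\}$ has positive lower density) --- is not merely a convenience but a necessary correction, and with it your argument goes through cleanly. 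The paper's omission hides an actual error in the statement.
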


\begin{proof}
The proof is straight forward. So we omit it.
\end{proof}

\begin{thm}
$st\mbox{-}LIM^{r^{\alpha}}_x$, the rough statistical limit set of order $\alpha$ of a sequence $x$ is closed.
\end{thm}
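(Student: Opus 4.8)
The plan is to show that the complement of $st\mbox{-}LIM_x^{r^\alpha}$ is open, or equivalently, to take a convergent sequence $\{y_i\}_{i \in \mathbb{N}}$ in $st\mbox{-}LIM_x^{r^\alpha}$ with $y_i \to y$ in $X$ and prove $y \in st\mbox{-}LIM_x^{r^\alpha}$. Fix $\varepsilon > 0$. First I would choose $i_0$ so that $\parallel y_{i_0} - y \parallel < \frac{\varepsilon}{2}$. Since $y_{i_0} \in st\mbox{-}LIM_x^{r^\alpha}$, by definition we have $\underset{n \rightarrow \infty}{\lim}\frac{1}{n^\alpha}|\{k \leq n : \parallel x_k - y_{i_0}\parallel \geq r + \frac{\varepsilon}{2}\}| = 0$.

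The key step is the inclusion of index sets. I claim that
\[
\{k \in \mathbb{N} : \parallel x_k - y \parallel \geq r + \varepsilon\} \subseteq \{k \in \mathbb{N} : \parallel x_k - y_{i_0}\parallel \geq r + \tfrac{\varepsilon}{2}\},
\]
which follows from the triangle inequality: if $\parallel x_k - y_{i_0}\parallel < r + \frac{\varepsilon}{2}$, then $\parallel x_k - y \parallel \leq \parallel x_k - y_{i_0}\parallel + \parallel y_{i_0} - y\parallel < r + \frac{\varepsilon}{2} + \frac{\varepsilon}{2} = r + \varepsilon$. Consequently, for every $n$,
\[
\frac{1}{n^\alpha}\big|\{k \leq n : \parallel x_k - y\parallel \geq r + \varepsilon\}\big| \leq \frac{1}{n^\alpha}\big|\{k \leq n : \parallel x_k - y_{i_0}\parallel \geq r + \tfrac{\varepsilon}{2}\}\big|.
\]
Letting $n \to \infty$, the right-hand side tends to $0$, hence so does the left-hand side. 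Since $\varepsilon > 0$ was arbitrary, $y \in st\mbox{-}LIM_x^{r^\alpha}$, so the set is closed.

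I do not anticipate a serious obstacle here; the argument is a routine triangle-inequality estimate combined with monotonicity of the density functional $\frac{1}{n^\alpha}|\cdot|$ under set inclusion. The only point requiring a word of care is the degenerate case $st\mbox{-}LIM_x^{r^\alpha} = \emptyset$, which is vacuously closed and can be dispatched in one sentence at the start. One should also note explicitly that the bound $\frac{1}{n^\alpha}|\{k \le n : \dots\}| \ge 0$ guarantees the limit of the left-hand side is genuinely $0$ (not merely $\le 0$), so that the defining condition of $st\mbox{-}LIM_x^{r^\alpha}$ is met on the nose.
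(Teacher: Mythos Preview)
Your proof is correct and follows essentially the same approach as the paper's: pick a sequence in $st\mbox{-}LIM_x^{r^{\alpha}}$ converging to some $y$, fix $\varepsilon>0$, choose an index with $\parallel y_{i_0}-y\parallel<\varepsilon/2$, and use the triangle inequality to compare the counting sets. The only cosmetic difference is that the paper phrases the key inclusion in terms of the complementary ``good'' sets $\{k:\parallel x_k-y_{k_0}\parallel<r+\varepsilon/2\}\subseteq\{k:\parallel x_k-y\parallel<r+\varepsilon\}$, whereas you work directly with the ``bad'' sets, which is arguably cleaner.
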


\begin{proof}
If $st\mbox{-}LIM_x^{r^{\alpha}} = \emptyset$ then there is nothing to prove. So we assume that $st\mbox{-}LIM_x^{r^{\alpha}} \neq \emptyset$. We can choose a sequence $\{y_k\}_{k \in \mathbb{N}} \subseteq st\mbox{-}LIM_x^{r^{\alpha}}$ such that $y_k \longrightarrow y_*$ for $ k \rightarrow \infty $. It suffices to prove that $y_* \in st\mbox{-}LIM_x^{r^{\alpha}}.$\\
Let $\varepsilon > 0$. Since $y_k \longrightarrow y_*$ there exists $k_{\varepsilon} \in \mathbb{N}$ such that $\parallel y_k - y_* \parallel < \frac{\varepsilon}{2}$ for $k > k_{\varepsilon}$.
Now choose $k_0 \in \mathbb{N}$ such that $k_0 > k_{\varepsilon}$. Then we can write $\parallel y_{k_0} - y_* \parallel < \frac{\varepsilon}{2}$.
Again since $\{y_k\}_{k \in \mathbb{N}} \subseteq st\mbox{-}LIM_x^{r^{\alpha}}$ we have $y_{k_0} \in st\mbox{-}LIM_x^{r^{\alpha}}$. This implies
\begin{equation}
\underset{n \rightarrow \infty}{\lim}\frac{1}{n^{\alpha}} | \{k \leq n : \parallel x_k - y_{k_0} \parallel \geq r + \frac{\varepsilon}{2} \} | = 0. 
\end{equation}
Now we show the inclusion
\begin{equation}
\{k \leq n: \parallel x_k - y_{k_0} \parallel < r + \frac{\varepsilon}{2} \} \subseteq \{k \leq n: \parallel x_k - y_* \parallel < r + \varepsilon \}
\end{equation}
holds.\\
Choose $j \in \{k \leq n: \parallel x_k - y_{k_0} \parallel < r + \frac{\varepsilon}{2}\}$. Then we have $\parallel x_j - y_{k_0} \parallel < r + \frac{\varepsilon}{2}$ and hence $\parallel x_j - y_* \parallel \leq \parallel x_j - y_{k_0} \parallel + \parallel y_{k_0} - y_* \parallel < r + \varepsilon$ which implies $j \in \{k \leq n: \parallel x_k -y_* \parallel < r + \varepsilon\}$, which proves the inclusion (2). \\
From (1) we can say that the set on the right hand side of(2) has natural density 1. Then the set on the left hand side of (2) must have natural density 1. Hence we get $\underset{n \rightarrow \infty}{\lim} \frac{1}{n^{\alpha}} |\{k \leq n: \parallel x_k - y_* \parallel \geq r + \varepsilon \} | = 0$.\\
This completes the proof.
\end{proof}

\begin{thm}
Let $x$ be a sequence in $X$. Then the rough statistical limit set of order $\alpha$  $st\mbox{-}LIM^{r^{\alpha}}_x$ is convex.
\end{thm}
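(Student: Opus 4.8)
The plan is to show that if $\xi_0, \xi_1 \in st\mbox{-}LIM_x^{r^{\alpha}}$ and $\lambda \in [0,1]$, then $\xi_\lambda := (1-\lambda)\xi_0 + \lambda\xi_1$ also belongs to $st\mbox{-}LIM_x^{r^{\alpha}}$. The heart of the argument is a set-inclusion of the kind already used in the proof of Theorem 3.3: for a given $\varepsilon > 0$, I will show that
\begin{equation*}
\{k \leq n : \parallel x_k - \xi_0 \parallel < r + \varepsilon\} \cap \{k \leq n : \parallel x_k - \xi_1 \parallel < r + \varepsilon\} \subseteq \{k \leq n : \parallel x_k - \xi_\lambda \parallel < r + \varepsilon\}.
\end{equation*}
Indeed, if $j$ lies in the left-hand set, then by the triangle inequality and convexity of the norm,
\begin{equation*}
\parallel x_j - \xi_\lambda \parallel = \parallel (1-\lambda)(x_j - \xi_0) + \lambda(x_j - \xi_1) \parallel \leq (1-\lambda)\parallel x_j - \xi_0 \parallel + \lambda\parallel x_j - \xi_1 \parallel < (1-\lambda)(r+\varepsilon) + \lambda(r+\varepsilon) = r + \varepsilon,
\end{equation*}
so $j$ lies in the right-hand set.

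Next I would pass to complements within $\{k \le n : k \in \mathbb{N}\}$ and to densities of order $\alpha$. Since $\xi_0, \xi_1 \in st\mbox{-}LIM_x^{r^{\alpha}}$, each of the sets $\{k \leq n : \parallel x_k - \xi_i \parallel \geq r + \varepsilon\}$, $i = 0, 1$, has natural density of order $\alpha$ equal to zero. By De Morgan, the complement of the left-hand intersection above is the union of these two sets, so it too has density of order $\alpha$ zero; hence the left-hand intersection has "density $1$" in the sense that $\frac{1}{n^\alpha}$ times the count of its complement tends to $0$. By the inclusion, the set $\{k \leq n : \parallel x_k - \xi_\lambda \parallel \geq r + \varepsilon\}$ is contained in that complement, and therefore $\underset{n \rightarrow \infty}{\lim}\frac{1}{n^{\alpha}}|\{k \leq n : \parallel x_k - \xi_\lambda \parallel \geq r + \varepsilon\}| = 0$. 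Since $\varepsilon > 0$ was arbitrary, $\xi_\lambda \in st\mbox{-}LIM_x^{r^{\alpha}}$, which is what convexity requires.

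I do not anticipate a serious obstacle here; the argument is essentially the same bookkeeping with densities of order $\alpha$ as in Theorem 3.3, the only new ingredient being the convexity of the norm to handle the convex combination. The one point to state carefully is the subadditivity of the density-zero property of order $\alpha$: if two sets $A_n, B_n \subseteq \{1,\dots,n\}$ satisfy $\frac{|A_n|}{n^\alpha} \to 0$ and $\frac{|B_n|}{n^\alpha} \to 0$, then $\frac{|A_n \cup B_n|}{n^\alpha} \to 0$, which is immediate from $|A_n \cup B_n| \leq |A_n| + |B_n|$. With that remark in place the proof is complete.
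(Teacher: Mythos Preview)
Your proof is correct and follows essentially the same route as the paper: both take two points in $st\mbox{-}LIM_x^{r^{\alpha}}$, use the convexity of the norm to show that any index $k$ with $\parallel x_k-\xi_i\parallel<r+\varepsilon$ for $i=0,1$ also satisfies $\parallel x_k-\xi_\lambda\parallel<r+\varepsilon$, and then finish with a density-of-order-$\alpha$ argument on the complements. Your handling of the complement step is in fact cleaner than the paper's, which asserts $\lim_{n\to\infty}\frac{1}{n^{\alpha}}|K_1^{c}\cap K_2^{c}|=1$ (a statement that is false for $\alpha<1$), whereas you correctly phrase it as the complement having density of order $\alpha$ equal to zero via $|A_n\cup B_n|\le|A_n|+|B_n|$.
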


\begin{proof}
Choose $y_1, y_2 \in st\mbox{-}LIM_x^{r^{\alpha}}$ and let $\varepsilon > 0$. Define $K_1 = \{k \leq n : \parallel x_k - y_1 \parallel \geq r + \varepsilon\}$ and $K_2 = \{k \leq n: \parallel x_k - y_2 \parallel \geq r + \varepsilon\}$. Since $y_1, y_2 \in st\mbox{-}LIM_x^{r^{\alpha}}$, we have $\underset{n \rightarrow \infty}{\lim}\frac{1}{n^{\alpha}} | K_1 | = \underset{n \rightarrow \infty}{\lim}\frac{1}{n^{\alpha}} | K_2 | = 0$. Let $\lambda$ be any positive real number with $0 \leq \lambda \leq 1$.\\ 
Then 
\begin{eqnarray*}
\parallel x_k - [(1 - \lambda)y_1 + {\lambda}y_2]\parallel = \parallel (1 - \lambda)(x_k - y_1) + {\lambda}(x_k - y_2) \parallel < r + \varepsilon
\end{eqnarray*}
for each $k \in {K_1}^c \cap {K_2}^c$. Since $\underset{n \rightarrow \infty}{lim}\frac{1}{n^{\alpha}} | {K_1}^c \cap {K_2}^c | = 1$, we get
\begin{eqnarray*}
\underset{n \rightarrow \infty}{\lim} \frac{1}{n^{\alpha}}| \{k \leq n: \parallel x_k -[(1 - \lambda)y_1 + {\lambda}y_2] \parallel \geq r + \varepsilon \} | = 0
\end{eqnarray*}
that is
\begin{eqnarray*}
[(1 - \lambda)y_1 + (\lambda)y_2] \in st\mbox{-}LIM_x^{r^{\alpha}}
\end{eqnarray*}
which proves the convexity of the set $st\mbox{-}LIM_x^{r^{\alpha}}$.
\end{proof}

\begin{thm}
Let $x$ be a sequence in $X$ and $r > 0$. Then the sequence $x$ is rough statistical convergent of order $\alpha$ to $\xi \in X $ if and only if there exists a sequence $y = \{y_n\}_{n \in \mathbb{N}}$ in $X$ such that $y$ is statistically convergent of order $\alpha$ to $\xi$ and $\parallel x_n - y_n \parallel \leq r$ for all $n \in \mathbb{N}$.
\end{thm}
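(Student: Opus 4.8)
The plan is to establish the two implications separately: sufficiency follows from a triangle inequality together with the monotonicity of the natural density of order $\alpha$ under set inclusion, while necessity requires constructing $y$ explicitly by ``retracting'' each $x_n$ toward $\xi$ onto the closed ball of radius $r$ centred at $x_n$.

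First, for sufficiency, suppose such a $y$ exists. I would fix $\varepsilon > 0$ and observe that if $\parallel y_k - \xi \parallel < \varepsilon$, then by $\parallel x_k - y_k \parallel \le r$ and the triangle inequality $\parallel x_k - \xi \parallel < r + \varepsilon$. This yields the inclusion
\[
\{k \le n : \parallel x_k - \xi \parallel \ge r + \varepsilon\} \subseteq \{k \le n : \parallel y_k - \xi \parallel \ge \varepsilon\}
\]
for every $n$. Since $y$ is statistically convergent of order $\alpha$ to $\xi$, the right-hand set has natural density of order $\alpha$ zero; as $|K_n|/n^{\alpha}$ is monotone in $K$, so does the left-hand set, which is exactly $x \overset{st-r^{\alpha}}\longrightarrow \xi$.

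For necessity, assume $x \overset{st-r^{\alpha}}\longrightarrow \xi$ and define
\[
y_n = \begin{cases}
\xi, & \text{if } \parallel x_n - \xi \parallel \le r,\\
x_n + r\,\dfrac{\xi - x_n}{\parallel x_n - \xi \parallel}, & \text{if } \parallel x_n - \xi \parallel > r.
\end{cases}
\]
Then $\parallel x_n - y_n \parallel = 0$ in the first case and $\parallel x_n - y_n \parallel = r$ in the second, so $\parallel x_n - y_n \parallel \le r$ for all $n \in \mathbb{N}$. A short computation gives $y_n - \xi = (x_n - \xi)\bigl(1 - \tfrac{r}{\parallel x_n - \xi \parallel}\bigr)$ in the second case, hence $\parallel y_n - \xi \parallel = \max\{0, \parallel x_n - \xi \parallel - r\}$ for every $n$. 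Thus for any $\varepsilon > 0$, $\parallel y_k - \xi \parallel \ge \varepsilon$ forces $\parallel x_k - \xi \parallel \ge r + \varepsilon$, so
\[
\{k \le n : \parallel y_k - \xi \parallel \ge \varepsilon\} \subseteq \{k \le n : \parallel x_k - \xi \parallel \ge r + \varepsilon\},
\]
and the right-hand set has density of order $\alpha$ zero by hypothesis; therefore $y$ is statistically convergent of order $\alpha$ to $\xi$, and $\parallel x_n - y_n \parallel \le r$ holds for all $n$, completing the proof.

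I expect the main obstacle — indeed essentially the only nontrivial point — to be in the necessity part: one must produce $y$ with $\parallel x_n - y_n \parallel \le r$ for \emph{every} $n$, not merely for almost all $k(\alpha)$, which is why $y$ is defined by the explicit clamp above rather than by altering $x$ only off a density-zero set. This step also uses that $X$ is a normed linear space (the standing assumption), since it rescales the vector $\xi - x_n$; in a general metric space this construction would not be available.
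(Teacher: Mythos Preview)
Your proof is correct and follows essentially the same approach as the paper: the sufficiency direction uses the same triangle-inequality inclusion of index sets, and the necessity direction uses the identical explicit ``retraction'' $y_n = \xi$ when $\|x_n-\xi\|\le r$ and $y_n = x_n + r\,(\xi-x_n)/\|x_n-\xi\|$ otherwise, leading to $\|y_n-\xi\| = \max\{0,\|x_n-\xi\|-r\}$. Your write-up is in fact slightly more careful than the paper's (you verify $\|x_n-y_n\|\in\{0,r\}$ and state the set inclusion for the necessity direction explicitly), but the argument is the same.
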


\begin{proof}
The condition is necessary.\\

Let $x_n \overset{st\mbox{-}r^{\alpha}}\longrightarrow \xi$. Choose any $\varepsilon > 0$. Then we have
\begin{eqnarray}
\underset{n \rightarrow \infty}{\lim}\frac{1}{n^{\alpha}}|\{k \leq n: \parallel x_k - \xi \parallel \geq r + \varepsilon\}| = 0  ~~\mbox{for some} ~~r > 0.
\end{eqnarray}\\
Now we define \\
\begin{eqnarray*}
y_n &=& \xi,~~\mbox{if}~~ \parallel x_n - \xi \parallel \leq r \\
    &=& x_n + r \frac{\xi - x_n}{\parallel x_n - \xi \parallel},~~\mbox{otherwise}
\end{eqnarray*}\\
Then we can write \\
\begin{eqnarray*}
\parallel y_n - \xi \parallel &=& 0,~~\mbox{if}~~ \parallel x_n - \xi \parallel \leq r \\
                             &=& \parallel x_n - \xi \parallel - r,~~\mbox{otherwise}
\end{eqnarray*}\\
and by definition of $y_n$, we have $\parallel x_n - y_n \parallel \leq r$ for all $n \in \mathbb{N}$.
Hence by (3) and the definition of $y_n$ we get $\underset{n \rightarrow \infty}{\lim} \frac{1}{n^{\alpha}}|\{ k \leq n: \parallel y_k - \xi \parallel \geq \varepsilon \}| = 0$. Which implies that the sequence $\{y_n\}_{n \in \mathbb{N}}$ is statistically covergent of order $\alpha$ to $\xi$.\\

The condition is sufficient.\\

Since $\{y_n\}_{n \in \mathbb{N}}$ is statistically convergent of order $\alpha$ to $\xi$, we have \\ 
$\underset{n \rightarrow \infty}{\lim}\frac{1}{n^{\alpha}} |\{k \leq n: \parallel y_k - \xi \parallel \geq \varepsilon\}| = 0$ for all $\varepsilon > 0$. Also since for a given $r > 0$ and for the sequence $x = \{x_n\}_{n \in \mathbb{N}}$ $\parallel x_n -y_n \parallel < r$, the inclusion $\{k \leq n: \parallel x_k - \xi \parallel \geq r + \varepsilon\} \subseteq \{k \leq n: \parallel y_k - \xi \parallel \geq \varepsilon\}$ holds. Hence we get $\underset{n \rightarrow \infty}{\lim}\frac{1}{n^{\alpha}} |\{k \leq n: \parallel x_k - \xi \parallel \geq r + \varepsilon\}| = 0$.\\
This completes the proof.
\end{proof}

\begin{thm}
For an arbitrary $c \in {\Gamma}_x$, where ${\Gamma}_x$ is the set of all rough statistical cluster points of a sequence  $x \in X$, we have for a positive real number $r$, $\parallel \xi -c \parallel \leq r$ for all $\xi \in st\mbox{-}LIM_x^{r^{\alpha}}$.
\end{thm}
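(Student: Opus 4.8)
The plan is to argue by contradiction, using only the triangle inequality together with the definition of an $r^{\alpha}$-st-limit and the defining property of the set $\Gamma_x$. Suppose, to the contrary, that there exist $c \in \Gamma_x$ and $\xi \in st\mbox{-}LIM_x^{r^{\alpha}}$ with $\parallel \xi - c \parallel > r$. Set $\varepsilon = \tfrac{1}{2}\bigl(\parallel \xi - c \parallel - r\bigr)$, so that $\varepsilon > 0$ and $\parallel \xi - c \parallel = r + 2\varepsilon$.

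The first step is to record the key set inclusion
\[
\{k \leq n : \parallel x_k - c \parallel < \varepsilon \} \subseteq \{k \leq n : \parallel x_k - \xi \parallel \geq r + \varepsilon \},
\]
which follows immediately from the triangle inequality: if $\parallel x_k - c \parallel < \varepsilon$, then $\parallel x_k - \xi \parallel \geq \parallel \xi - c \parallel - \parallel x_k - c \parallel > (r + 2\varepsilon) - \varepsilon = r + \varepsilon$. The second step is to invoke the hypothesis $\xi \in st\mbox{-}LIM_x^{r^{\alpha}}$, which gives $\underset{n \rightarrow \infty}{\lim}\frac{1}{n^{\alpha}}|\{k \leq n : \parallel x_k - \xi \parallel \geq r + \varepsilon\}| = 0$; hence, by the inclusion above, the set $\{k : \parallel x_k - c \parallel < \varepsilon\}$ has natural density of order $\alpha$ equal to $0$. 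But $c \in \Gamma_x$ means precisely that for every $\eta > 0$ the set $\{k : \parallel x_k - c \parallel < \eta\}$ does \emph{not} have natural density of order $\alpha$ zero; taking $\eta = \varepsilon$ yields a contradiction. Therefore $\parallel \xi - c \parallel \leq r$ for every $\xi \in st\mbox{-}LIM_x^{r^{\alpha}}$ and every $c \in \Gamma_x$, which is the assertion (and the argument works for any fixed $r > 0$).

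I expect the only delicate point to be the bookkeeping around the definition of a (rough) statistical cluster point of order $\alpha$: one must use that the relevant neighbourhood-index set fails to have order-$\alpha$ density zero, rather than the (generally false, for $\alpha < 1$) statement that it has density $1$. Keeping the strict and non-strict inequalities consistent in the displayed inclusion is the other routine thing to watch; beyond that, the geometric content is entirely contained in the triangle-inequality inclusion above, so no further computation is needed.
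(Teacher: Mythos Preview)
Your proof is correct and follows essentially the same route as the paper: a contradiction argument using the triangle-inequality inclusion $\{k \leq n : \parallel x_k - c \parallel < \varepsilon\} \subseteq \{k \leq n : \parallel x_k - \xi \parallel \geq r + \varepsilon\}$, the paper taking $\varepsilon = \tfrac{1}{3}(\parallel \xi - c \parallel - r)$ in place of your $\tfrac{1}{2}$. Your write-up is in fact slightly more careful, since you verify the inclusion explicitly and note the correct form of the cluster-point hypothesis for $\alpha < 1$.
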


\begin{proof}
Let $ 0 < \alpha \leq 1 $ be given. On the contrary let assume that there exists a point $ c \in {\Gamma}_x $ and $ \xi \in st\mbox{-}LIM^{r^{\alpha}}_x $ such that $ \parallel \xi - c \parallel > r $. Choose $ \varepsilon = \frac{\parallel \xi - c \parallel - r}{3}$. Then
\begin{eqnarray}
\{k \leq n : \parallel x_k - \xi \parallel \geq r + \varepsilon \} \supseteq \{k \leq n : \parallel x_k - c \parallel < \varepsilon \}
\end{eqnarray}
holds. Since $ c \in {\Gamma}_x $, we have $\underset {n \rightarrow \infty}{\lim} \frac{1}{n^{\alpha}} |\{k \leq n : \parallel x_k - c \parallel < \varepsilon\}| \neq 0 $. Hence by (4) we have $\underset {n \rightarrow \infty}{\lim}\frac{1}{n^{\alpha}} |\{k \leq n : \parallel x_k - \xi \parallel < r + \varepsilon\}| \neq 0 $. This is a contradictin to the fact $ \xi \in st\mbox{-}LIM^{r^{\alpha}}_x $.
\end{proof}

\begin {thm}
Let $x$ be sequence in the strictly convex space. Let $r$ and $\alpha$ be two positive real numbers. If for any $y_1, y_2 \in st\mbox{-}LIM_x^{r^{\alpha}}$ with $\parallel y_1 - y_2 \parallel = 2r$, then $x$ is statistically convergent of order $\alpha$ to $\frac{y_1 + y_2}{2}$.
\end{thm}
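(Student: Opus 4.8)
\medskip
\noindent\textbf{Plan of proof.}

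Fix $y_1,y_2\in st\mbox{-}LIM_x^{r^{\alpha}}$ with $\parallel y_1-y_2\parallel=2r$ and set $y_3=\frac{y_1+y_2}{2}$. Since $st\mbox{-}LIM_x^{r^{\alpha}}$ is convex by Theorem 3.4, we already know $y_3\in st\mbox{-}LIM_x^{r^{\alpha}}$, so $x$ is rough statistical convergent of order $\alpha$ to $y_3$; the real task is to \emph{remove the rough radius}, i.e.\ to upgrade this to genuine statistical convergence of order $\alpha$ to $y_3$. Equivalently, I would show that for every $\varepsilon>0$ the set $\{k\le n:\parallel x_k-y_3\parallel\ge\varepsilon\}$ has natural density zero of order $\alpha$.

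The idea is that, along a set of density $1$ of order $\alpha$, the triangle inequality $\parallel y_1-y_2\parallel\le\parallel x_k-y_1\parallel+\parallel x_k-y_2\parallel$ is almost an equality, and strict convexity then pins $x_k$ near $y_3$. Concretely, I would fix $\varepsilon>0$ and a small $\delta>0$ and put
\begin{equation*}
K_\delta=\{k\in\mathbb{N}:\parallel x_k-y_1\parallel<r+\delta\}\cap\{k\in\mathbb{N}:\parallel x_k-y_2\parallel<r+\delta\}.
\end{equation*}
Because $y_1,y_2\in st\mbox{-}LIM_x^{r^{\alpha}}$, the two excluded sets have density zero of order $\alpha$, so $K_\delta$ has density $1$ of order $\alpha$. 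For $k\in K_\delta$, the chain $2r=\parallel y_1-y_2\parallel\le\parallel x_k-y_1\parallel+\parallel x_k-y_2\parallel<2(r+\delta)$ forces $r-\delta<\parallel x_k-y_1\parallel<r+\delta$ and likewise for $\parallel x_k-y_2\parallel$. Writing $p_k=x_k-y_1$ and $q_k=x_k-y_2$ we have $p_k-q_k=y_2-y_1$, hence $\parallel p_k-q_k\parallel=2r$, while $x_k-y_3=\tfrac12(p_k+q_k)$; thus $x_k-y_3$ is twice the midpoint of two vectors of norm within $\delta$ of $r$ whose difference has norm exactly $2r$, and strict convexity should force $\parallel x_k-y_3\parallel$ to be small uniformly for $k\in K_\delta$ as soon as $\delta$ is small. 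Once this uniform claim is available, for a suitable $\delta=\delta(\varepsilon)$ one gets $\{k:\parallel x_k-y_3\parallel\ge\varepsilon\}\subseteq\mathbb{N}\setminus K_\delta$, a set of density zero of order $\alpha$; running $\varepsilon$ through a sequence tending to $0$ then gives that $x$ is statistically convergent of order $\alpha$ to $y_3$.

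The delicate point, and the step I expect to be the main obstacle, is making ``strict convexity forces $\parallel x_k-y_3\parallel$ small'' quantitative and uniform in $k$. I would argue by contradiction: if the claim failed there would be a fixed $\mu_0>0$ and, for each $m$, an index $k_m\in K_{1/m}$ with $\parallel x_{k_m}-y_3\parallel\ge\mu_0$; the vectors $p_{k_m},q_{k_m}$ are bounded (by $r+1$), so passing to a convergent subsequence of $(p_{k_m},q_{k_m})$ yields limits $p,q$ with $\parallel p\parallel=\parallel q\parallel=r$, $\parallel p-q\parallel=2r$ and $\parallel p+q\parallel\ge2\mu_0$. Since $\parallel p-q\parallel=\parallel p\parallel+\parallel{-q}\parallel$ and $p,-q\ne0$, strict convexity forces $p=-q$, whence $p+q=0$, a contradiction. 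This extraction is a Bolzano--Weierstrass argument, so strictly speaking one works in a finite dimensional strictly convex space (or replaces ``strictly convex'' by ``uniformly convex'', where the estimate is available directly from the modulus of convexity). An alternative route that avoids re-deriving the compactness step is to use Theorem 3.6: every rough statistical cluster point $c$ of $x$ of order $\alpha$ satisfies $\parallel y_1-c\parallel\le r$ and $\parallel y_2-c\parallel\le r$, and the same strict-convexity computation as above forces $c=y_3$; combining the uniqueness of the cluster point with the statistical boundedness of order $\alpha$ of $x$ (Theorem 3.1) then gives statistical convergence of order $\alpha$ to $y_3$.
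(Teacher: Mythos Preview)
Your ``alternative route'' at the end is exactly the paper's proof: pick an arbitrary statistical cluster point $c\in\Gamma_x$, use Theorem~3.6 to get $\parallel y_i-c\parallel\le r$ for $i=1,2$, combine with $\parallel y_1-y_2\parallel=2r$ and strict convexity to force $c=\tfrac12(y_1+y_2)$, and then conclude by Theorem~3.1 together with the principle that a statistically bounded sequence with a unique statistical cluster point is statistically convergent to it.

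Your primary (direct) approach is genuinely different: you try to bypass cluster points and prove the density-zero statement for $\{k:\parallel x_k-y_3\parallel\ge\varepsilon\}$ directly from a uniform strict-convexity estimate on the set $K_\delta$. As you correctly flag, that uniformity is not available in an arbitrary strictly convex space without either compactness (finite dimension) or a modulus of convexity, so the direct route carries an explicit extra hypothesis. It is worth noting that the cluster-point route does not really escape this: the implication ``statistically bounded with a unique statistical cluster point $\Rightarrow$ statistically convergent'' that the paper invokes is itself the Pehlivan--Mamedov result, proved for finite-dimensional spaces, so both arguments ultimately rest on the same dimensional restriction---you are just more candid about it. One minor slip in your write-up: for $\alpha<1$ the phrase ``$K_\delta$ has density $1$ of order $\alpha$'' is not correct (the quantity $|K_\delta\cap[1,n]|/n^\alpha$ tends to $+\infty$, not $1$); what you actually need, and what you do use correctly two lines later, is only that $\mathbb{N}\setminus K_\delta$ has $\alpha$-density zero.
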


\begin{proof}
Let $z \in {\Gamma}_x$. Then for any $y_1,y_2 \in st-LIM_x^{r^{\alpha}}$ implies
\begin{equation}
\parallel y_1 - z \parallel \leq r ~~\mbox{and}~~ \parallel y_2 - z \parallel \leq r.
\end{equation}\\
On the other hand we have
\begin{equation}
2r = \parallel y_1 - y_2 \parallel \leq \parallel y_1 - z \parallel + \parallel y_2 - z \parallel.
\end{equation}\\
Hence by (5) and (6) we get $\parallel y_1 - z \parallel = \parallel y_2 - z \parallel = r$. Since 
\begin{equation}
{\frac{1}{2}}(y_2 - y_1) = {\frac{1}{2}}[(z - y_1)+(y_2 -z)]
\end{equation}\\
and $\parallel y_1 - y_2 \parallel = 2r$, we get $\parallel {\frac{1}{2}}(y_2 -y_1) \parallel = r$.
By strict convexity of the space and from the equality(7) we get ${\frac{1}{2}}(y_1 - y_2) = z - y_1 = y_2 - z $, which implies that $z = {\frac{1}{2}}(y_1 + y_2)$. Hence $z$ is unique statistical cluster point of the sequence $x$. On the other hand, from the assumption $y_1, y_2 \in st\mbox{-}LIM_x^{r^{\alpha}}$ implies that $st\mbox{-}LIM_x^{r^{\alpha}} \neq \emptyset$. So by Theorem 3.1 the sequence $x$ is statistically bounded of order ${\alpha}$. Since $z$ is the unique statistical cluster point ot the statistically bounded sequence $x$ of order $\alpha$ we have the sequence $x$ is statistically convergent to $z = {\frac{1}{2}}(y_1 +y_2)$. 
\end{proof}

\begin{thm}
Let $0 < \alpha \leq 1$ and $x$ and $y$ be two sequences. Then\\
(i) if $r^{\alpha}\mbox{-}st\mbox{-}\lim x = x_0$ and $c \in \mathbb{R}$, then $r^{\alpha}\mbox{-}st\mbox{-}\lim cx = cx_0$\\
(ii)if $r^{\alpha}\mbox{-}st\mbox{-}\lim x = x_0$ and $r^{\alpha}\mbox{-}st\mbox{-}\lim y = y_0$ then $r^{\alpha}\mbox{-}st\mbox{-}\lim(x + y) = x_0 +y_0$.  
\end{thm}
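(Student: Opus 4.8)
The plan is to argue both parts straight from the definition of rough statistical convergence of order $\alpha$, holding the radius fixed at $r$ throughout and reducing each assertion to showing that an exceptional set has natural density of order $\alpha$ equal to zero. Fix $\varepsilon > 0$ and recall that the hypotheses supply $x_0 \in st\mbox{-}LIM_x^{r^{\alpha}}$ (and, in (ii), $y_0 \in st\mbox{-}LIM_y^{r^{\alpha}}$), that is, the sets $\{k \leq n : \parallel x_k - x_0 \parallel \geq r + \delta\}$ are order-$\alpha$ null for every $\delta > 0$. I would carry out all the bookkeeping through these exceptional (null) sets and the elementary monotonicity ``a subset of an order-$\alpha$ null set is itself order-$\alpha$ null'', rather than through complementary ``density one'' sets, since for $\alpha < 1$ the order-$\alpha$ density of all of $\{1, \dots, n\}$ diverges and ``density one'' is not the right notion here.

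For part (i) the case $c = 0$ is immediate, as $cx$ is the zero sequence and $cx_0 = 0$. For $c \neq 0$ I would use absolute homogeneity of the norm to obtain the exact identity $\{k \leq n : \parallel cx_k - cx_0 \parallel \geq r + \varepsilon\} = \{k \leq n : \parallel x_k - x_0 \parallel \geq (r + \varepsilon)/|c|\}$, and then embed the right-hand set into some $\{k \leq n : \parallel x_k - x_0 \parallel \geq r + \delta\}$ with $\delta > 0$; this embedding holds precisely when $(r + \varepsilon)/|c| \geq r + \delta$, and it would deliver the radius-$r$ conclusion $r^{\alpha}\mbox{-}st\mbox{-}\lim cx = cx_0$.

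For part (ii) I would invoke the triangle inequality in contrapositive form: for any split $s + t = r + \varepsilon$ one has $\{k \leq n : \parallel (x_k + y_k) - (x_0 + y_0) \parallel \geq s + t\} \subseteq \{k \leq n : \parallel x_k - x_0 \parallel \geq s\} \cup \{k \leq n : \parallel y_k - y_0 \parallel \geq t\}$. If both $s$ and $t$ can be taken strictly larger than $r$, the right-hand side is a union of two order-$\alpha$ null sets, hence null, and the target exceptional set at radius $r$ inherits order-$\alpha$ density zero, giving $r^{\alpha}\mbox{-}st\mbox{-}\lim (x + y) = x_0 + y_0$.

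The main obstacle, and the step on which the statement as worded turns, is the bookkeeping of the radius. In (i) the required embedding $(r + \varepsilon)/|c| \geq r + \delta$ is available for every $\varepsilon > 0$ when $|c| \leq 1$, but for $|c| > 1$ the quantity $(r + \varepsilon)/|c|$ falls below $r$ once $\varepsilon$ is small, and the hypothesis gives no control on $\{k \leq n : \parallel x_k - x_0 \parallel \geq r'\}$ for $r' < r$, whose order-$\alpha$ density may well be positive --- indeed this positive-density cluster inside radius $r$ is exactly what rough convergence permits. In (ii) the constraint that both $s$ and $t$ exceed $r$ forces $s + t > 2r$, so a split with $s + t = r + \varepsilon$ exists only for $\varepsilon > r$ and fails for small $\varepsilon$. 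Thus homogeneity scales the admissible radius by $|c|$ and addition scales it by the number of summands, and I expect the radius-$r$ conclusion to be reachable by these estimates only in the degenerate regimes ($c = 0, \pm 1$, or $|c| \leq 1$ in (i)); a fixed-radius constant-sequence check, for instance $x_k \equiv r$ with $c = 2$, and $x_k = y_k \equiv r$ in (ii), confirms that the radius-$r$ target cannot be met in general, so the difficulty here is structural rather than a shortfall of the estimates.
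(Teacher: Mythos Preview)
Your approach via exceptional-set inclusions is exactly the route the paper takes, and your diagnosis of the radius bookkeeping is on target. For (i) the paper arrives at the same identity
\[
\frac{1}{n^{\alpha}}\bigl|\{k\le n:\parallel cx_k-cx_0\parallel\ge r+\varepsilon\}\bigr|
=\frac{1}{n^{\alpha}}\Bigl|\Bigl\{k\le n:\parallel x_k-x_0\parallel\ge \tfrac{r}{|c|}+\tfrac{\varepsilon}{|c|}\Bigr\}\Bigr|
\]
and then simply asserts the conclusion; it does not address the case $|c|>1$, in which $r/|c|<r$ and the hypothesis gives no control over the right-hand set. For (ii) the paper writes
\[
\bigl|\{k\le n:\parallel(x_k+y_k)-(x_0+y_0)\parallel\ge r+\varepsilon\}\bigr|
\le\bigl|\{k\le n:\parallel x_k-x_0\parallel\ge r+\varepsilon\}\bigr|
+\bigl|\{k\le n:\parallel y_k-y_0\parallel\ge r+\varepsilon\}\bigr|,
\]
which is precisely the inclusion you correctly observe to be unavailable: it would require that $\parallel a+b\parallel\ge r+\varepsilon$ force $\parallel a\parallel\ge r+\varepsilon$ or $\parallel b\parallel\ge r+\varepsilon$, and that already fails for $a=b=(r+\varepsilon)/2$ in $\mathbb{R}$.

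Your constant-sequence checks ($x_k\equiv r$ with $c=2$; $x_k=y_k\equiv r$ with $x_0=y_0=0$) show that the statement, read with the \emph{same} roughness radius $r$ on both sides, cannot hold in general, so the difficulty you flag is structural and the paper's proof does not overcome it. The natural repair is to let the radius scale: conclude that $cx_0$ is a rough statistical limit of order $\alpha$ for $cx$ at radius $|c|r$, and that $x_0+y_0$ is one for $x+y$ at radius $2r$. That is exactly what your estimates --- and the paper's displayed identity in (i) --- actually deliver.
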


\begin{proof}
(i) If $c =0$ it is trivial. Suppose that $c \neq 0$. Then the proof of (i) follows from
$\frac{1}{n^{\alpha}}|\{k \leq n : \parallel cx_k - cx_0 \parallel \geq r + \varepsilon \}|
\leq \frac{1}{n^{\alpha}}|\{k \leq n : \parallel x_k - x_0 \parallel \geq \frac{r + \varepsilon}{|c|} \}| = \frac{1}{n^{\alpha}} |\{k \leq n : \parallel x_k - x_0 \parallel \geq \frac{r}{|c|} + \frac{\varepsilon}{|c|} \}|$. Since $x$ is rough statistical convergent of order $\alpha$, hence $cx$ is also rough statistical convergent of order $\alpha$.\\
Again
\begin{eqnarray*}
\frac{1}{n^{\alpha}}|\{ k \leq n : \parallel (x_k + y_k)-(x_0 + y_0)\parallel \geq r + \varepsilon\}|
 \leq \frac{1}{n^{\alpha}}|\{ k \leq n : \parallel x_k - x_0 \parallel \geq r + \varepsilon\}|\\
+ \frac{1}{n^{\alpha}}|\{ k \leq n : \parallel y_k - y_0 \parallel \geq r + \varepsilon\}|
\end{eqnarray*}
\end{proof}

It is easy to see that every convergent sequence is rough statistical convergent of order $\alpha$, but the converse is not true always.

\begin{Example}
Let us consider the following sequence of real numbers defined by,
\begin{eqnarray*}
x_k &=& 1, ~~\mbox{if}~~k=n^3 \\
    &=& 0, ~~\mbox{otherwise}
\end{eqnarray*}
Then it is easy to see that the sequence is rough statistical convergent of order $\alpha$ with $rS^{\alpha}\mbox{-}\lim x_k = 0$ for $\alpha > \frac{1}{3}$, but it is not a convergent sequence.    
\end{Example}

\begin{thm}
Let $0 < \alpha \leq \beta \leq 1$. Then $rS^{\alpha} \subseteq rS^{\beta}$, where $rS^{\alpha}$ and $rS^{\beta}$ denote the set of all rough statistical convergent sequence of order $\alpha $ and $\beta $ respectively. 
\end{thm}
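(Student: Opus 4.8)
The plan is to show that membership in $rS^{\alpha}$ forces membership in $rS^{\beta}$ by passing the defining density estimate through the elementary inequality $n^{\alpha} \le n^{\beta}$, which holds for every $n \in \mathbb{N}$ precisely because $0 < \alpha \le \beta \le 1$. In fact I expect the same rough statistical limit to be inherited, so one really proves the slightly sharper statement: if $x \overset{st\text{-}r^{\alpha}}{\longrightarrow} \xi$, then $x \overset{st\text{-}r^{\beta}}{\longrightarrow} \xi$.

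First I would fix $x \in rS^{\alpha}$, so by definition there is some $r$ with $0 < r \le 1$ and some $\xi \in X$ such that $st\text{-}LIM_x^{r^{\alpha}}$ contains $\xi$; that is, for every $\varepsilon > 0$,
\[
\lim_{n \rightarrow \infty} \frac{1}{n^{\alpha}} \big| \{ k \leq n : \parallel x_k - \xi \parallel \geq r + \varepsilon \} \big| = 0 .
\]
Next I would observe that since $0 < \alpha \le \beta \le 1$ we have $n^{\alpha} \le n^{\beta}$ for all $n \in \mathbb{N}$, hence $\frac{1}{n^{\beta}} \le \frac{1}{n^{\alpha}}$. Writing $A_n(\varepsilon) = \{ k \leq n : \parallel x_k - \xi \parallel \geq r + \varepsilon \}$, this gives
\[
0 \leq \frac{1}{n^{\beta}} \, |A_n(\varepsilon)| \leq \frac{1}{n^{\alpha}} \, |A_n(\varepsilon)|
\]
for every $n$ and every $\varepsilon > 0$.

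Then a squeeze argument lets me pass to the limit: the right-hand side tends to $0$ by the order-$\alpha$ hypothesis, so $\lim_{n \rightarrow \infty} \frac{1}{n^{\beta}} |A_n(\varepsilon)| = 0$ for each $\varepsilon > 0$. By the definition of rough statistical convergence of order $\beta$ this says $\xi \in st\text{-}LIM_x^{r^{\beta}}$, in particular $st\text{-}LIM_x^{r^{\beta}} \neq \emptyset$, so $x \in rS^{\beta}$. Since $x \in rS^{\alpha}$ was arbitrary, $rS^{\alpha} \subseteq rS^{\beta}$. There is no serious obstacle here; the only point requiring a word of care is that the inequality $n^{\alpha} \le n^{\beta}$ is used for all $n \ge 1$ (it does hold, with equality at $n=1$), and that the value of $r$ and the limit $\xi$ may be kept unchanged when moving from order $\alpha$ to order $\beta$.
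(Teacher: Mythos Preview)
Your proof is correct and follows essentially the same route as the paper: both arguments use the elementary inequality $\frac{1}{n^{\beta}} \le \frac{1}{n^{\alpha}}$ (valid since $0<\alpha\le\beta\le 1$) to bound the order-$\beta$ density by the order-$\alpha$ density and then conclude by squeezing. Your write-up is just a more detailed version, making explicit that the same $r$ and the same limit $\xi$ carry over.
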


\begin{proof}
If $0 < \alpha \leq \beta \leq 1$ then 
\begin{eqnarray*}
\frac{1}{n^{\beta}} |\{ k \leq n : \parallel x_k - l\parallel \geq r + \varepsilon \}| \leq \frac{1}{n^{\alpha}} |\{ k \leq n : \parallel x_k - l\parallel \geq r + \varepsilon \}|
\end{eqnarray*}
for every $\varepsilon > 0 $ and some $ r > 0 $ with limit $ l $. \\
Clearly this shows that $rS^{\alpha} \subseteq rS^{\beta}$.
\end{proof}

We do not know whether for a sequence $ x $ in $ X $, $ r > 0 $ and for  $ 0 < \alpha < 1 $, $ diam(st\mbox{-}LIM_x^{r^{\alpha}}) \leq 2r $ is true or not. 
  
So we leave this above fact as an open problem.


\begin{op}
Is it true for a sequence $ x $ in $ X $, $ r > 0 $ and $ 0 < \alpha < 1 $, $ diam(st\mbox{-}LIM_x^{r^{\alpha}}) \leq 2r $. 
\end{op}
Acknowledgement: I express my gratitude to Prof. Salih Aytar, Suleyman Demirel University, Turkey, for his paper entitled ``Rough Statistical Convergence'' which inspired me to prepare and develope this paper. I also express my gratitude to Prof. Pratulananda Das, Jadavpur University, India and Prof. Prasanta Malik, Burdwan University, India for their advice in preparation of this paper.

\end{document}